\documentclass[secthm,seceqn,amsthm,ussrhead,12pt]{amsart}
\usepackage[utf8]{inputenc}
\usepackage[english]{babel}

\usepackage{amssymb,amsmath,amsthm,amsfonts,xcolor,enumerate,hyperref,comment,longtable,cleveref}
\usepackage{soul}
\usepackage{times}
\usepackage{cite}
\usepackage{pdflscape}
\usepackage{ulem}
\usepackage[mathcal]{euscript}
\usepackage{tikz}
\usepackage{hyperref}
\usepackage{cancel}
\usepackage{stmaryrd}

\usetikzlibrary{arrows}

\sloppy
\newtheorem{theorem}{Theorem}

\newtheorem{proposition}[theorem]{Proposition}

\newtheorem{definition}[theorem]{Definition}

\newtheorem{problem}[theorem]{Problem}

\usepackage{stmaryrd}
\usepackage{xcolor}

\newcommand{\A}{\mathbf{A}}

\setlength{\topmargin}{0mm}
\setlength{\textwidth}{170mm}  
\setlength{\textheight}{200mm}
\setlength{\evensidemargin}{0mm}
\setlength{\oddsidemargin}{0mm}

\begin{document}

\noindent{\Large   
Local and 2-local derivations of   
simple $n$-ary algebras}

    \medskip

   {\bf
    Bruno Leonardo Macedo Ferreira$^{a}$,
    Ivan Kaygorodov$^{b}$ \& Karimbergen Kudaybergenov$^{c}$  \\

    \medskip
 
}

{\tiny

$^{a}$ Federal University of Technology, Guarapuava, Brazil

$^{b}$ CMCC, Universidade Federal do ABC, Santo Andr\'e, Brazil

$^{c}$  V.I.Romanovskiy Institute of Mathematics Uzbekistan Academy of Sciences,  Tashkent,   Uzbekistan

\

   E-mail addresses:

\smallskip
   
Bruno Leonardo Macedo Ferreira   
   (brunoferreira@utfpr.edu.br)

    Ivan Kaygorodov (kaygorodov.ivan@gmail.com)

    Karimbergen Kudaybergenov (karim2006@mail.ru)

}

\ 

{\bf Keywords:} local derivation, $2$-local derivation,  $n$-Lie algebra, ternary Malcev algebra.

\

AMS: 17A30, 17A42.

\ 

{\bf Abstract}: 
In the present paper we prove that every local and $2$-local
derivation of the complex finite dimensional  simple Filippov algebra 
 is a derivation.
As a corollary we have the description of all local and $2$-local derivations of complex finite dimensional semisimple Filippov  algebras.
All local derivations of the ternary Malcev algebra $M_8$ are described.  
It is the first example of a   finite-dimensional simple algebra which admits pure  local  derivations, i.e. algebra admits a local  derivation which is not a derivation.

\section*{Introduction}
The study of local derivations
started with Kadison’s article
 \cite{k90}.  A similar notion, which characterizes non-linear generalizations of derivations, 
was introduced by  \v{S}emrl as  $2$-local derivations.
In his paper \cite{Semrl97} was 
proved that a $2$-local derivation of the algebra
$B(H)$ of all bounded linear operators on the infinite-dimensional
separable Hilbert space $H$ is a derivation. 
After these works, appear numerous new results related to the description of local and $2$-local derivations of associative algebras (see, for example, \cite{Khrypchenko19}).
The study of local and $2$-local derivations of non-associative algebras was initiated in some 
papers of  Ayupov and  Kudaybergenov (for the case of Lie algebras, see \cite{ak17,ak16}).
In particular, they proved that there are no pure local and $2$-local derivations on semisimple finite-dimensional Lie algebras.
In \cite{AyuKudRak16} it is also given examples of $2$-local derivations on nilpotent Lie algebras which are not derivations.
After the cited works, the study of local and $2$-local derivations was continued for Leibniz algebras \cite{akoz} and Jordan algebras \cite{aa17}.
Local automorphisms and $2$-local automorphisms, also were studied in many cases,
for example, they were studied on Lie algebras \cite{ak17,c19}.
The present paper is devoted to the study local and $2$-local derivations of $n$-ary simple algebras, 
such that Filippov algebras and the ternary Malcev algebra $M_8.$
Early, some certain types of generalized derivations of these algebras were described in \cite{Ivan,k14}.

Our brief
introduction finishes with two principal definitions.

\begin{definition}
Let $\A$ be an $n$-ary algebra. 
A linear map $\nabla : \A \rightarrow \A$ is called a local derivation, if for
any element $x \in \A$ there exists a derivation ${\mathfrak D}_x: \A \rightarrow \A$ such that $\nabla(x) = {\mathfrak D}_x(x)$.
\end{definition}
\begin{definition}A (not necessary linear) map $\Delta: \A \rightarrow \A$ is called a $2$-local derivation, if for
any two elements $x, y \in \A$ there exists an derivation ${\mathfrak D}_{x,y} : \A \rightarrow \A$ such that
$\Delta(x) = {\mathfrak D}_{x,y}(x)$, $\Delta(y) = {\mathfrak D}_{x,y}(y)$.
\end{definition}

\section{Local and $2$-local derivations of Filippov algebras}
\subsection{Preliminaries}
A Filippov algebra, whose definition appeared in \cite{filippov}, is defined as an algebra $L$ with one anticommutative
$n$-ary operation $[x_1,\ldots ,x_n]$ satisfying the identity
$$[[x_1,\ldots ,x_n], y_2,\ldots ,y_n] = \sum^n_{i=1} [x_1,\ldots , [x_i, y_2,\ldots ,y_n],\ldots ,x_n].$$
An example of an $(n + 1)$-dimensional $n$-ary Filippov algebra is the algebra with the basis ${\ell} = \{e_1,\ldots ,e_{n+1}\}$
and the multiplication table
\begin{center}
$[e_1,\ldots , \hat{e}_i, \ldots ,e_{n+1}]=(-1)^{n+i+1}e_i,$
\end{center}
where $\hat{e}_i$ denotes the omission of the element $e_i$ from the $n$-ary product. We denote this algebra by $A_{n+1}.$
We will consider $(n-1)$-ary $n$-dimensional algebras $A_n$ for $n \geq 4.$
As mentioned in \cite{Ling}, the algebras of type $A_{n}$ exhaust all simple finite-dimensional $(n-1)$-ary Filippov
algebras over an algebraically closed field of characteristic zero.
Thanks to \cite{Ivan} we have the following description of the matrix of a derivation of the $n$-ary algebra $A_{n}$. 
\begin{proposition}\label{chave}
A linear map ${\mathfrak D} : A_{n} \rightarrow A_{n}$ is a derivation of the $(n-1)$-ary algebra $A_{n}$ if and only if the matrix
of ${\mathfrak D}$ has the following matrix form:

\begin{displaymath}
[{\mathfrak D}]_{\ell} = \begin{pmatrix}
0&x_{12}&x_{13}& \ldots & x_{1k}& \ldots&x_{1n-1} & x_{1n}\\
-x_{12}&0&x_{23}& \ldots & x_{2k} & \ldots&x_{2n-1} &x_{2n}\\
-x_{13}&-x_{23}&0& \ldots & x_{3k} & \ldots& x_{3n-1}&x_{3n}\\
\vdots & \vdots & \vdots & \ddots & \vdots & \ddots & \vdots& \vdots\\
-x_{1k} & -x_{2k} & -x_{3k} &\ldots & 0 & \ldots &x_{kn-1}& x_{kn}\\
\vdots & \vdots & \vdots & \ddots & \vdots & \ddots & \vdots & \vdots\\
-x_{1n-1}&-x_{2n-1}&-x_{3n-1}& \ldots & -x_{kn-1}& \ldots &0 & x_{n-1n}\\
-x_{1n}&-x_{2n}&-x_{3n}& \ldots & -x_{kn}& \ldots & -x_{n-1n} & 0\\
\end{pmatrix},
\end{displaymath}
that is, $x_{ii} = 0$ and $x_{ij} + x_{ji} = 0$ to $i \neq j$.
\end{proposition}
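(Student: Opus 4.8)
The plan is to exploit that $A_n$ is an alternating $(n-1)$-ary algebra on $n$ basis vectors, so its only nonzero products of basis elements are the $n$ fundamental products $P_i := [e_1,\ldots,\hat e_i,\ldots,e_n] = (-1)^{n+i}e_i$, every product repeating a basis vector being zero. Writing $\mathfrak{D}(e_j) = \sum_{i=1}^n x_{ij}e_i$, so that the $(i,j)$ entry of $[\mathfrak{D}]_\ell$ is $x_{ij}$, the derivation condition is the identity $\mathfrak{D}([y_1,\ldots,y_{n-1}]) = \sum_{s} [y_1,\ldots,\mathfrak{D}(y_s),\ldots,y_{n-1}]$ on all tuples of basis vectors. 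For the forward (necessity) direction it is enough to impose this on the fundamental products, since those already produce all the constraints; sufficiency will be obtained separately through inner derivations.

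First I would substitute the identity on $P_i$, with $s$ ranging over the slots occupied by $e_k$, $k\neq i$. The left-hand side is $(-1)^{n+i}\mathfrak{D}(e_i)=(-1)^{n+i}\sum_l x_{li}e_l$. On the right-hand side, replacing the $e_k$-slot by $\mathfrak{D}(e_k)=\sum_l x_{lk}e_l$ gives a bracket that survives only when the inserted $e_l$ is not already among the remaining factors, i.e. only for $l=k$ (returning $x_{kk}P_i$) or $l=i$ (filling the omitted slot). Comparing the coefficient of $e_i$ yields $x_{ii}=\sum_{k\neq i}x_{kk}$ for every $i$; subtracting two such relations forces all diagonal entries equal, whence $(n-2)x_{ii}=0$ and so $x_{ii}=0$ because $n\geq 4$. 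Comparing the coefficient of $e_k$ for $k\neq i$ gives, after reordering the $l=i$ bracket into standard increasing form, a relation of the shape $x_{ki}=(-1)^{\sigma(i,k)}(-1)^{k-i}x_{ik}$, where $(-1)^{\sigma(i,k)}$ is the sign of the permutation restoring the order. The key computation is $\sigma(i,k)\equiv i-k-1 \pmod 2$, whence $(-1)^{\sigma(i,k)}(-1)^{k-i}=-1$ and therefore $x_{ij}+x_{ji}=0$. This sign bookkeeping, matching the reordering sign against the structure constants $(-1)^{n+i}$ and $(-1)^{n+k}$, is the only delicate point and I expect it to be the main obstacle.

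For the converse it is cleanest to avoid re-verifying the identity on every tuple (in particular on tuples with a repeated entry) and instead exhibit enough derivations directly. The Filippov identity says precisely that each inner map $\ad(a_2,\ldots,a_{n-1})\colon x\mapsto [x,a_2,\ldots,a_{n-1}]$ is a derivation. Taking the $a_j$ to be the $n-2$ distinct basis vectors that omit a chosen pair $e_p,e_q$, the map $\ad(a_2,\ldots,a_{n-1})$ sends $e_p$ and $e_q$ to nonzero multiples of $e_q$ and $e_p$ and annihilates the remaining basis vectors; by the forward direction already established, its matrix is skew-symmetric, hence a nonzero scalar multiple of the matrix unit combination $E_{pq}-E_{qp}$. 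As the omitted pair $\{p,q\}$ ranges over all $\binom{n}{2}$ two-element subsets, these derivations are linearly independent and span the space of skew-symmetric matrices. Since derivations form a linear space, every skew-symmetric matrix is a derivation, and together with the forward direction this establishes the claimed equivalence.
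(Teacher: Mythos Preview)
The paper does not actually prove this proposition; it is stated with the attribution ``Thanks to \cite{Ivan}'', i.e., it is quoted from Kaygorodov's 2014 paper on $(n+1)$-ary derivations of semisimple Filippov algebras, and no argument is given in the present paper.

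Your proposal, on the other hand, is a correct self-contained proof. The sign bookkeeping in the forward direction checks out: replacing the $e_k$-slot of $P_i$ by $e_i$ and restoring increasing order costs $(-1)^{|i-k|-1}$, which combined with the structure-constant ratio $(-1)^{k-i}$ gives $-1$, yielding $x_{ki}=-x_{ik}$; and the diagonal relation $x_{ii}=\sum_{k\neq i}x_{kk}$ indeed forces all diagonal entries equal and hence zero for $n\ge 4$. For the converse, invoking the Filippov identity to see that each $\ad(e_{j_1},\ldots,e_{j_{n-2}})$ is a derivation, and then using the already-established forward direction to conclude its matrix is a nonzero multiple of $E_{pq}-E_{qp}$, is a clean way to produce a spanning set of the skew-symmetric matrices without rechecking the Leibniz rule on degenerate tuples. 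Since the paper provides no proof of its own, there is nothing further to compare; your argument would serve as a full proof where the paper merely cites one.
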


\subsection{Local derivations of semisimple Filippov algebras}
In the present subsection, we proved that each local derivation of the complex finite-dimensional simple Filippov $n$-ary $(n>2)$ algebra is a derivation. 
As a corollary, jointed with the results from \cite{ak16, Ling}, we have the same statement for all complex finite-dimensional semisimple Filippov $n$-ary $(n>1)$ algebras.
\begin{theorem}\label{local}
Each local derivation of $A_{n}$ is a derivation.
\end{theorem}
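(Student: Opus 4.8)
The plan is to lean entirely on the explicit description of the derivation algebra provided by Proposition~\ref{chave}, which identifies the derivations of $A_n$ with the skew-symmetric matrices in the basis $\ell$. Equip $A_n$ with the standard symmetric bilinear form $\langle u, v\rangle = \sum_{i=1}^{n} u_i v_i$, where $u_i, v_i$ denote the coordinates of $u,v$ in $\ell$. The elementary observation underlying everything is that, for any matrix $M$, the scalar $x^{\top} M x = \langle x, Mx\rangle$ equals its own transpose $x^{\top} M^{\top} x$, so $M$ skew-symmetric forces $x^{\top} M x = 0$ for every $x$; conversely, $x^{\top} M x$ is the quadratic form attached to the symmetric part $\tfrac12 (M+M^{\top})$, and over a field of characteristic zero such a quadratic form vanishes identically precisely when that symmetric part is zero. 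In particular, by Proposition~\ref{chave} every derivation ${\mathfrak D}$ satisfies $\langle x, {\mathfrak D}(x)\rangle = 0$ for all $x \in A_n$.

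First I would translate the hypothesis into this language. Let $\nabla$ be a local derivation of $A_n$ and let $N$ be its matrix in the basis $\ell$. Fix an arbitrary $x \in A_n$: by definition there is a derivation ${\mathfrak D}_x$ with $\nabla(x) = {\mathfrak D}_x(x)$, and since ${\mathfrak D}_x$ is represented by a skew-symmetric matrix the observation above yields
\[
\langle x, \nabla(x)\rangle = \langle x, {\mathfrak D}_x(x)\rangle = 0 .
\]
As $x$ was arbitrary, the quadratic form $x \mapsto x^{\top} N x$ vanishes identically on $A_n$. It then remains only to run the observation of the first paragraph in reverse: the identical vanishing of $x^{\top} N x$ forces $N + N^{\top} = 0$, so $N$ is skew-symmetric, and Proposition~\ref{chave} guarantees that $\nabla$ is a derivation, completing the argument.

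I do not expect a genuine obstacle here, since the heavy lifting has already been done in Proposition~\ref{chave}. The one conceptual point worth flagging is that the local-derivation property is used only to extract the single scalar identity $\langle x, \nabla(x)\rangle = 0$ for each $x$; one never needs a finer analysis of the image $\{{\mathfrak D}(x) : {\mathfrak D} \text{ a derivation}\}$, because this necessary condition is already sufficient to pin down skew-symmetry. The only step requiring a touch of care is the quadratic-form fact, which genuinely relies on the characteristic being different from $2$ — exactly the complex setting in which $A_n$ is defined.
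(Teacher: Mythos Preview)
Your proof is correct and follows essentially the same approach as the paper: both reduce, via Proposition~\ref{chave}, to showing that the matrix of $\nabla$ is skew-symmetric, and both do so by exploiting that a skew-symmetric matrix $A$ satisfies $A x \perp x$. The paper carries this out by evaluating at $e_k$ and $e_k+e_l$ and reading off coordinates, whereas you package the same computation as the vanishing of the quadratic form $x\mapsto x^\top N x$ followed by polarization; the underlying argument is the same.
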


\begin{proof}
Let $\nabla$ be an arbitrary local derivation of $A_{n}$, by definition we have
$$\nabla(x) = D_x(x).$$
Let us consider $B$ and $A_x$ the matrix of the linear operator $\nabla$ and $D_x$ respectively. Thus $B(x) = A_x(x)$,
$$B(x) = \begin{pmatrix}
b_{11}&b_{12}&b_{13}& \ldots & b_{1k}& \ldots&b_{1n-1} & b_{1n}\\
b_{21}&b_{22}&b_{23}& \ldots & b_{2k} & \ldots&b_{2n-1} &b_{2n}\\
b_{31}&b_{32}&b_{33}& \ldots & b_{3k} & \ldots& b_{3n-1}&b_{3n}\\
\vdots & \vdots & \vdots & \ddots & \vdots & \ddots & \vdots\\
b_{k1} & b_{k2} & b_{k3} &\ldots & b_{kk} & \ldots &b_{kn-1}& b_{kn}\\
\vdots & \vdots & \vdots & \ddots & \vdots & \ddots & \vdots\\
b_{n-11}&b_{n-12}&b_{n-13}& \ldots & b_{n-1k}& \ldots &b_{n-1n-1} & b_{n-1n}\\
b_{n1}&b_{n2}&b_{3n}& \ldots & b_{nk}& \ldots & b_{nn-1} & b_{nn}\\
\end{pmatrix}\begin{pmatrix} x_1\\x_2\\x_3\\ \vdots \\ x_k\\ \vdots \\ x_{n-1} \\ x_n  \end{pmatrix} =$$
$$\begin{pmatrix}
0&a^x_{12}&a^x_{13}& \ldots & a^x_{1k}& \ldots&a^x_{1n-1} & a^x_{1n}\\
-a^x_{12}&0&a^x_{23}& \ldots & a^x_{2k} & \ldots&a^x_{2n-1} &a^x_{2n}\\
-a^x_{13}&-a^x_{23}&0& \ldots & a^x_{3k} & \ldots& a^x_{3n-1}&a^x_{3n}\\
\vdots & \vdots & \vdots & \ddots & \vdots & \ddots & \vdots\\
-a^x_{1k} & -a^x_{2k} & -a^x_{3k} &\ldots & 0 & \ldots &a^x_{kn-1}& a^x_{kn}\\
\vdots & \vdots & \vdots & \ddots & \vdots & \ddots & \vdots\\
-a^x_{1n-1}&-a^x_{2n-1}&-a^x_{3n-1}& \ldots & -a^x_{kn-1}& \ldots &0 & a^x_{n-1n}\\
-a^x_{1n}&-a^x_{2n}&-a^x_{3n}& \ldots & -a^x_{kn}& \ldots & -a^x_{n-1n} & 0\\
\end{pmatrix}\begin{pmatrix} x_1\\x_2\\x_3\\ \vdots \\ x_k\\ \vdots \\ x_{n-1} \\ x_n  \end{pmatrix} = A_x(x),$$
for all $x \in A_{n+1}$. 
Taking $\Xi_k = (0, \cdots, 0 , \underbrace{1}_{k}, 0 , \cdots, 0)^T$ for all $k= 1, \ldots  , n$ we get $b_{kk} = 0$ for $k = 1, \ldots , n$. 
Hence we obtain 
\begin{center}
    $B(\Xi_k+\Xi_l)=A_{\Xi_k+\Xi_l}(\Xi_k+\Xi_l).$
\end{center}
Hence, $A_{\Xi_k+\Xi_l}$ is an antisymmetric matrix, 
it gives that $b_{kl}=-b_{lk}$ and the matrix $B$ is antisymmetric.
The last gives that $\nabla$ is a derivation.


\end{proof}

\subsection{$2$-Local derivations of semisimple Filippov algebras}
In the present subsection, we prove that each local derivation of the complex finite-dimensional simple Filippov $n$-ary $(n>2)$ algebra is a derivation. 
As a corollary, jointed with the results from \cite{AyuKudRak16, Ling}, we have the same statement for all complex finite-dimensional semisimple Filippov $n$-ary $(n>1)$ algebras.
\begin{theorem}\label{2local}
Each $2$-local derivation of $A_{n}$ is a derivation.
\end{theorem}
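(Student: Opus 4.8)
The plan is to exploit the fact, supplied by Proposition~\ref{chave}, that the derivations of $A_n$ are exactly the antisymmetric matrices. In particular, every derivation $\mathfrak{D}$ satisfies $\mathfrak{D}^T=-\mathfrak{D}$, so with respect to the standard symmetric bilinear form $\langle u,v\rangle=\sum_i u_iv_i$ we have $\langle \mathfrak{D}u,v\rangle=-\langle u,\mathfrak{D}v\rangle$ for all $u,v$. The strategy has two stages: first build, from the values of $\Delta$ on the basis vectors, a candidate antisymmetric matrix $B$ (hence a genuine derivation), and then show that $\Delta$ coincides with $B$ on every element. The delicate point throughout is that a $2$-local derivation $\Delta$ is a priori only a set map, with no linearity assumed, so each identity must be extracted purely from the defining property.

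For the first stage I would set $v_k=\Delta(\Xi_k)$ and, for each pair of indices $k\neq l$, apply the $2$-local property to $\Xi_k,\Xi_l$ to obtain an antisymmetric $\mathfrak{D}_{k,l}$ with $\Delta(\Xi_k)=\mathfrak{D}_{k,l}\Xi_k$ and $\Delta(\Xi_l)=\mathfrak{D}_{k,l}\Xi_l$. Thus $v_k$ and $v_l$ are the $k$-th and $l$-th columns of $\mathfrak{D}_{k,l}$, and reading off entries gives $(v_k)_k=(\mathfrak{D}_{k,l})_{kk}=0$ together with $(v_k)_l=(\mathfrak{D}_{k,l})_{lk}=-(\mathfrak{D}_{k,l})_{kl}=-(v_l)_k$. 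Equivalently, using the bilinear form, the relation $\langle\Delta(x),y\rangle=-\langle x,\Delta(y)\rangle$ holds for all $x,y$. Consequently the matrix $B$ whose $k$-th column is $v_k$ is antisymmetric, so by Proposition~\ref{chave} it is a derivation of $A_n$, and by construction $B\Xi_k=\Delta(\Xi_k)$ for every $k$.

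For the second stage I would pass to $\Delta-B$, the map $x\mapsto\Delta(x)-Bx$. Since $B$ is a derivation and the derivations form a linear space, $\Delta-B$ is again a $2$-local derivation (for each pair $x,y$ one replaces $\mathfrak{D}_{x,y}$ by $\mathfrak{D}_{x,y}-B$), and it annihilates every $\Xi_k$. Renaming, we may assume $\Delta(\Xi_k)=0$ for all $k$ and aim to prove $\Delta=0$. Fix an arbitrary $x$ and an index $k$, and apply the $2$-local property to the pair $x,\Xi_k$: there is an antisymmetric $E$ with $\Delta(x)=Ex$ and $E\Xi_k=\Delta(\Xi_k)=0$. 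The equation $E\Xi_k=0$ says the $k$-th column of $E$ vanishes, whence by antisymmetry its $k$-th row vanishes as well, so the $k$-th coordinate of $\Delta(x)=Ex$ is zero. As $k$ was arbitrary, every coordinate of $\Delta(x)$ is zero, i.e. $\Delta(x)=0$; undoing the reduction yields $\Delta=B$, a derivation.

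The main obstacle, and the step I would watch most carefully, is precisely this last rigidity argument, since it is where the absence of any assumed linearity for $\Delta$ must be overcome. The freedom to choose, for each coordinate direction $k$, a derivation annihilating $\Xi_k$ is exactly what forces each coordinate of $\Delta(x)$ to vanish; the only real care needed is the bookkeeping that a vanishing $k$-th column of an antisymmetric matrix forces a vanishing $k$-th row. Everything else reduces to routine manipulation of antisymmetric matrices via Proposition~\ref{chave}.
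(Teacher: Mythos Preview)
Your proof is correct and rests on the same key mechanism as the paper's: pair an arbitrary $x$ with a basis vector $\Xi_k$, use that the resulting derivation is antisymmetric (vanishing $k$-th column $\Rightarrow$ vanishing $k$-th row), and read off the $k$-th coordinate of $\Delta(x)$. The packaging, however, is different. The paper leaves $\Delta$ as is, observes from the pair $(e_j,a)$ that the $j$-th column of $D_{e_j,a}$ is $\Delta(e_j)$ and hence the $j$-th row is determined independently of $a$, so $(\Delta(a))_j$ is a fixed linear functional of $a$; from this it concludes $\Delta$ is linear, hence a local derivation, and finishes by invoking Theorem~\ref{local}. You instead first assemble the antisymmetric matrix $B$ from the values $\Delta(\Xi_k)$, subtract it, and show the remainder vanishes identically. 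Your route is self-contained (no appeal to Theorem~\ref{local}) and the subtract-then-kill reduction makes the endgame cleaner; the paper's route makes the passage through linearity and the local case explicit.
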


\begin{proof}

Let $\Delta$ be an arbitrary $2$-local derivation of $A_{n}$. Then, by the definition, for
every element $a, b \in A_{n}$, there exists a derivation $D_{a,b}$ of $A_{n}$ such that
$$\Delta(a) = D_{a,b}(a), \ \ \ \Delta(b) =  D_{a, b} (b).$$
By Proposition \ref{chave}, the matrix $A^{a,b}$ of the derivation $D_{a,b}$ has the following matrix
form:
$$
A^{a,b} = \begin{pmatrix}
0&x^{a,b}_{12}&x^{a,b}_{13}& \ldots & x^{a,b}_{1k}& \ldots&x^{a,b}_{1n}  \\
-x^{a,b}_{12}&0&x^{a,b}_{23}& \ldots & x^{a,b}_{2k} & \ldots&x^{a,b}_{2n} \\
-x^{a,b}_{13}&-x^{a,b}_{23}&0& \ldots & x^{a,b}_{3k} & \ldots& x^{a,b}_{3n}\\
\vdots & \vdots & \vdots & \ddots & \vdots & \ddots & \vdots   \\
-x^{a,b}_{1k} & -x^{a,b}_{2k} & -x^{a,b}_{3k} &\ldots & 0 & \ldots &x^{a,b}_{kn}\\
\vdots & \vdots & \vdots & \ddots & \vdots & \ddots & \vdots  \\
-x^{a,b}_{1n}&-x^{a,b}_{2n}&-x^{a,b}_{3n}& \ldots & -x^{a,b}_{kn}& \ldots &0 \\
\end{pmatrix}
$$
$$
=
 \begin{pmatrix}
0& & T^{a,b} \\   
 &\ddots &  \\
-T^{a,b} & & 0\\
\end{pmatrix}
$$
Let $a = \sum^{n}_{i=1}\lambda_{i} e_i$ be an arbitrary element from $A_{n}$. 
For every $v \in A_{n}$ there exists a derivation $D_{v,a}$ such that
$$\Delta(v) = D_{v,a}(v), \ \ \ \Delta(a) = D_{v,a}(a).$$
Then from $$D_{e_{n}, v}(e_{n}) = D_{e_{n}, a}(e_{n}), \ \ \ \ \ \ v \in A_{n},$$
it follows that $T^{e_{n}, v}_{i,n} = T^{e_{n}, a}_{i,n}$
for $i = 1, \ldots, n.$
Then we can write 

$$A^{e_{n}, a} = \begin{pmatrix}
\begin{tabular}{c c c | c}
0& &$\hat{T}$ &  \\
& 0&   & ${\bf T}^{e_{n},v}$\\
 $-\hat{T}$& & $\ddots$ &   \\ \hline
& $-({\bf T}^{e_{n},v})^T$ & & 0\\
\end{tabular} 
\end{pmatrix}$$
where 
$ {\bf T}^{e_{n},v} = 
\begin{pmatrix}
x^{e_{n},v}_{1n}\\
x^{e_{n},v}_{2n}\\
\vdots \\
x^{e_{n},v}_{n-1n}
\end{pmatrix}.
$
Hence, 
\begin{center}
$\Delta(a) = D_{e_{n},a}(a) = \sum_{i=1}^{n-1} \mu_{i}^{e_{n},a}e_i + \sum_{i=1}^{n}(-x^{e_{n},v}_{in}\lambda_{i} )e_{n},$
\end{center}
for some elements $\mu^{e_{n},a}_{i} \in \mathbb{F}$. 
Similarly, taking $e_j$ for each $j = n-1, n-2, \ldots, 2$ we have from
\begin{center}
$D_{e_j, v}(e_j) = D_{e_j, a}(e_j), \ \ \ \ v \in A_{n},$
\end{center}
we have the following ${  T}^{e_j,v}_{i,j} = {  T}^{e_j,a}_{i,j}$ for each $j= n-1, n-2, \ldots, 2$ and $T^{e_j,v}_{i,j} = - T^{e_j,v}_{j,i}$.
Hence, 
$$\Delta(a) = D_{e_i, a}(a), \ \ \text{for each} \  \  i = 1, \ldots, n.$$
Note that 
\begin{center}$\Delta(a) = \sum_{i=1}^{n}\sum_{j=1}^{n}(-T_{i,j}^{e_i,v_j}\lambda_{e_j})e_i, \, \, v_j \in A_{n+1}, \, \,  j= 1, \ldots n.$
\end{center}
Therefore the mapping $\Delta$ is linear and it is a local derivation. By Theorem \ref{local}, we get that $\Delta$ is a derivation. This completes the proof.   
\end{proof}

\section{Local   derivations of the ternary Malcev algebra $M_8$}
\subsection{Preliminaries}
The idea of introducing a generalization of Filippov algebras comes from binary Malcev algebras and it was realized in a paper of Pojidaev \cite{app}.
He defined $n$-ary Malcev algebras, generalizing Malcev algebras and $n$-ary Fillipov algebras.
For construction of the most  important example of $n$-ary Malcev (non-Filippov) we denote by $\A$ a composition algebra  with an involution $\bar{} : a \mapsto \bar{a}$ and unity $1$.
The symmetric, bilinear form $\left\langle x,y\right\rangle = \frac{1}{2} (x\bar{y} + y\bar{x})$ defined on $A$ is assumed to be
nonsingular.
If
$\A$ is equipped with a ternary multiplication $[\cdot,\cdot,\cdot]$ by the rule
\begin{center}$[x, y, z] = (x\bar{y})z - \left\langle y, z\right\rangle x + \left\langle x, z \right\rangle y - \left\langle x, y \right\rangle z,$
\end{center}
then $\A$ becomes a ternary  Malcev algebra \cite{app}, 
which will be denoted by $M(\A)$. 
If $\dim \A = 8$ then $M(\A)$ is  simple ternary Malcev (non-$3$-Lie) algebra and we denote it by $M_8$.

Let $A$ be that above mentioned composition algebra and assume that $1, a,b, c$ are orthonormal elements in $A.$ Choose the following basis of $M_8$ (see \cite{paulo}):
 \begin{longtable}{llll}
$e_1=1,$ & $e_2=a,$ & $e_3=b,$ & $e_4=ab,$\\
$e_5=c,$ & $e_6=ac,$ & $e_7=bc,$ & $e_8=abc.$
\end{longtable}
Further we need to the following properties of basis elements (see \cite{paulo}). For each $i\in \{2, \ldots, 8\},$ it is
possible to choose $j, k, l, m, s, t,$  all depending on $i,$ such that
\begin{align}\label{ets}
e_i = e_1 e_i = e_j e_k = e_l e_m = e_s e_t \,\, \textrm{and}\,\,
e_k e_m = e_t. 
\end{align}

Thanks to \cite{paulo} we have the following description of the basis of the algebra of derivations of  $M_8:$ 
$$\mathcal{B} = \left\{ 
\begin{array}{l}
\Delta_{23} - \Delta_{14}, \Delta_{24} + \Delta_{13}, \Delta_{25} - \Delta_{16},  \Delta_{26} + \Delta_{15},  \Delta_{27} + \Delta_{18}, \Delta_{28} - \Delta_{17}, \Delta_{34} - \Delta_{12},\\
\Delta_{35} - \Delta_{17},  \Delta_{36} - \Delta_{18}, \Delta_{37} + \Delta_{15}, \Delta_{38} + \Delta_{16}, \Delta_{45} - \Delta_{18}, \Delta_{46} + \Delta_{17}, \Delta_{47} - \Delta_{16}, 
 \\ \Delta_{48} + \Delta_{15}, \Delta_{56} - \Delta_{12}, \Delta_{57} - \Delta_{13}, 
  \Delta_{58} - \Delta_{14},
\Delta_{67} + \Delta_{14}, \Delta_{68} - \Delta_{13}, \Delta_{78} + \Delta_{12}\end{array} \right\},$$
where $\Delta_{ij} = e_{ij} - e_{ji}$ and $e_{ij}$ are the ordinary matrix units.
  
\begin{proposition} 
A linear map $\mathfrak{D} : M_{8} \rightarrow M_{8}$ is a derivation of the algebra $M_{8}$ if and only if the antisymmetric matrix
of $D$ has the following matrix form:
\begin{align}\label{derm8}
[\mathfrak{D}]_{\mathcal{B}} = 
\begin{pmatrix}
0        & -\gamma_1 & -\gamma_2 & -\gamma_3 & -\gamma_4 & -\gamma_5 & -\gamma_6 & -\gamma_7 \\
\gamma_1 & 0    & -\alpha_1&-\alpha_2& -\alpha_3&-\alpha_4&-\alpha_5&-\alpha_6  \\
\gamma_2 & \alpha_1 & 0  & -\alpha_7 &-\alpha_8&-\alpha_9&-\alpha_{10}&-\alpha_{11}\\
\gamma_3 & \alpha_2 & \alpha_7 & 0  &-\alpha_{12}&-\alpha_{13}&-\alpha_{14}&-\alpha_{15} \\
\gamma_4 & \alpha_3 & \alpha_8  & \alpha_{12} & 0 &-\alpha_{16}&-\alpha_{17}& -\alpha_{18}\\
\gamma_5 & \alpha_4 & \alpha_9  & \alpha_{13} &   \alpha_{16} &  0 &-\alpha_{19}&-\alpha_{20}\\
\gamma_6 & \alpha_5  & \alpha_{10} &  \alpha_{14} &   \alpha_{17}   &   \alpha_{19} & 0  & -\alpha_{21}\\
\gamma_7& \alpha_6  & \alpha_{11} & \alpha_{15} & \alpha_{18} & \alpha_{20} & \alpha_{21} & 0
\end{pmatrix},
\end{align}
where 
\begin{center}
$\gamma_1=-\alpha_7-\alpha_{16}+\alpha_{21},$ \, 
$\gamma_2=\alpha_2-\alpha_{17}-\alpha_{20},$ \, 
$\gamma_3=-\alpha_1-\alpha_{18}+\alpha_{19},$ \\

$\gamma_4=\alpha_4+\alpha_{10}+\alpha_{15},$ \,  
$\gamma_5=-\alpha_3+\alpha_{11}-\alpha_{14},$ \\

$\gamma_6=-\alpha_6-\alpha_8+\alpha_{13},$ \,  
$\gamma_7=\alpha_5-\alpha_9-\alpha_{12}.$
\end{center}
\end{proposition}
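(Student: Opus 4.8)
The plan is to read the matrix form off directly from the basis $\mathcal{B}$ of $\mathrm{Der}(M_8)$ recorded just above, which I am free to take as given. Each listed generator is an integer combination of the skew matrices $\Delta_{ij}=e_{ij}-e_{ji}$, hence is antisymmetric; therefore an arbitrary derivation $\mathfrak{D}=\sum_{m=1}^{21}\alpha_m B_m$ (where $B_1,\dots,B_{21}$ are the $21$ generators in the order listed) is automatically antisymmetric. This already isolates antisymmetric matrices as the only candidates, and reduces the statement to expressing the $28$ entries of $\mathfrak{D}$ through the $21$ parameters $\alpha_m$.

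The organizing observation I would record first is that each $B_m$ splits into a single \emph{principal} term $\Delta_{ij}$ with $2\le i<j\le 8$ and a single \emph{first-row correction} $\pm\Delta_{1k}$, and that $m\mapsto(i,j)$ is a bijection onto the $21$ pairs with $2\le i<j\le 8$. Consequently, for $2\le i<j\le 8$ the $(i,j)$ entry of $\mathfrak{D}$ is fed only by the principal term of the unique generator indexed by $(i,j)$. Fixing the sign convention in which $\Delta_{pq}$ contributes $-1$ in position $(p,q)$ (the convention that makes the displayed $\alpha$-block correct, e.g. the principal term of $B_1=\Delta_{23}-\Delta_{14}$ producing the entry $-\alpha_1$ at $(2,3)$), this reproduces the entire lower-right $\alpha$-block of \eqref{derm8} verbatim.

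The substance of the computation is the first row. Each entry $(1,k)$ is fed only by the corrections $\pm\Delta_{1k}$, which are distributed among the generators; a direct count shows that for every $k\in\{2,\dots,8\}$ exactly three generators carry a $\pm\Delta_{1k}$. I would simply pass once through the list, collecting these three signed coefficients for each $k$: for $k=2$ the corrections are $-\Delta_{12},-\Delta_{12},+\Delta_{12}$ in $B_7,B_{16},B_{21}$, so the coefficient of $\Delta_{12}$ is $-\alpha_7-\alpha_{16}+\alpha_{21}$, i.e. the $(1,2)$ entry equals $-\gamma_1$; the remaining columns $k=3,\dots,8$ produce in the same way the coefficients $\gamma_2,\dots,\gamma_7$ exactly as stated. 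Running the computation backwards gives the converse: any antisymmetric matrix whose first row is constrained by these seven relations coincides with $\sum_m\alpha_m B_m\in\mathrm{Der}(M_8)$, which settles the ``if'' direction and hence the equivalence.

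I do not anticipate a conceptual obstacle; the whole argument is mechanical once the bijection between generators and principal pairs is fixed, and the only place demanding care is the sign and index bookkeeping that matches each correction $\pm\Delta_{1k}$ to its relation $\gamma_{k-1}$. Should one wish to avoid invoking the cited basis, the alternative would be to first prove antisymmetry from the invariance of the form $\langle\cdot,\cdot\rangle$ under derivations, then impose the derivation identity on the triples $[e_a,e_b,e_c]$ using the multiplication rule and the structural relations \eqref{ets}, and solve the resulting linear system in the $28$ entries; the relations \eqref{ets} are precisely what renders that system manageable.
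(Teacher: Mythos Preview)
Your proposal is correct and matches the paper's approach: the paper does not actually prove this proposition but simply states it as a matrix reformulation of the basis $\mathcal{B}$ quoted from \cite{paulo}, and your argument is exactly the bookkeeping needed to pass from that basis to the displayed matrix. The only point worth tidying is the sign convention: since $\Delta_{ij}=e_{ij}-e_{ji}$ has $(i,j)$-entry $+1$, the parametrization in \eqref{derm8} corresponds to $\mathfrak{D}=-\sum_m\alpha_m B_m$ rather than $+\sum_m\alpha_m B_m$, which you handle implicitly but could make explicit.
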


\subsection{Local  derivations of $M_8$}
In the present subsection, we shall give a description of all local derivations of $M_8.$ As the principal result, we have that $M_8$ admits local derivations that are not derivations.
Which gives the first known example of a simple finite-dimensional algebra admitting pure local derivations.
The quotient space of the space of local derivation by the space of derivations of $M_8$ is of dimension $7.$

Recall \cite{paulo} that, if $1, u,v,w \in A$ are orthonormal, then
\begin{align}\label{uvw}
\overline{u}v\overline{u} = -\overline{v},\,\, (u\overline{v})w= -(u\overline{w})v,\,\, u(\overline{v}w)=-v(\overline{u}w). 
\end{align} 
Note that $u^2=v^2=w^2=-1.$ Thus using   Moufang identity $(uv)(wu)  =  u((vw)u)$ for composition algebra and the above first identity  we get that
 \begin{align}\label{mou}
(uv)(wu)=vw. 
\end{align}

 \begin{proposition}\label{aut}
 Let $x, y\in M_8$ be the elements such that $x^2=-1,$  $y\in  \left\{x\right\}^\perp, y^2=-1.$ Then there exists 
 $\Phi\in {\rm Aut}(M_8)$ such that 
 \begin{center}
$\Phi(e_2)=x$ and $\Phi(e_3)=y.$
 \end{center}
 \end{proposition}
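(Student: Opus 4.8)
The plan is to produce the required $\Phi$ as an automorphism of the underlying composition algebra $\A$ and then to observe that such a map is automatically an automorphism of $M_8$. Indeed, any $\Phi\in{\rm Aut}(\A)$ fixes $1$ and is orthogonal for $\langle\cdot,\cdot\rangle$, hence commutes with the conjugation $a\mapsto\bar a$; since the ternary product $[x,y,z]=(x\bar y)z-\langle y,z\rangle x+\langle x,z\rangle y-\langle x,y\rangle z$ is assembled only from the multiplication, the conjugation and the form, it follows that $\Phi([x,y,z])=[\Phi(x),\Phi(y),\Phi(z)]$. So it is enough to exhibit $\Phi\in{\rm Aut}(\A)$ with $\Phi(e_2)=x$ and $\Phi(e_3)=y$; in other words, to prove that ${\rm Aut}(\A)$ acts transitively on ordered orthonormal pairs of imaginary units. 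First I would reduce to this setting: writing $x=\lambda 1+x_0$ with $x_0$ imaginary, the identity $x^2=-1$ forces $\lambda x_0=0$, and discarding the degenerate scalar solution $x\in\mathbb{C}1$ we get $\lambda=0$ and $\langle x_0,x_0\rangle=1$, so $x$ is an imaginary unit; likewise $y$. Thus $(x,y)$ is an orthonormal pair of imaginary units, of exactly the same type as $(e_2,e_3)=(a,b)$.

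Next I would manufacture, from $x$ and $y$, a full Cayley--Dickson basis of $\A$ modelled on $\{e_1,\ldots,e_8\}$. Put $u:=xy$. Using the composition and orthogonality relations one checks that $\overline{xy}=-xy$, that $\langle xy,xy\rangle=\langle x,x\rangle\langle y,y\rangle=1$, and that $xy$ is orthogonal to $1,x,y$; hence $\{1,x,y,xy\}$ is an orthonormal basis of a quaternion subalgebra $Q$. The form is nondegenerate on the $4$-dimensional space $Q^{\perp}$, and as the ground field is algebraically closed it represents $1$, so there is $z\in Q^{\perp}$ with $z^2=-1$. By Cayley--Dickson doubling $\{1,x,y,xy,z,xz,yz,(xy)z\}$ is then an orthonormal basis whose multiplication table coincides with that of $\{e_1,\ldots,e_8\}$. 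I define $\Phi$ on the basis by $e_1\mapsto 1$, $e_2\mapsto x$, $e_3\mapsto y$, $e_4\mapsto xy$, $e_5\mapsto z$, $e_6\mapsto xz$, $e_7\mapsto yz$, $e_8\mapsto (xy)z$.

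The remaining point, and the real heart of the proof, is to verify that $\Phi$ is multiplicative, i.e. $\Phi(e_ie_j)=\Phi(e_i)\Phi(e_j)$ for all $i,j$. This is exactly where the structural identities \eqref{ets}, \eqref{uvw}, \eqref{mou} are used: by \eqref{ets} each product $e_ie_j$ is, up to sign, a single basis vector, while the relations $\overline u v\overline u=-\overline v$ and $(u\bar v)w=-(u\bar w)v$ together with the Moufang consequence $(uv)(wu)=vw$ guarantee that the generating triple $(x,y,z)$ obeys the same relations as $(a,b,c)$, so every product is reproduced faithfully. Granting this, $\Phi\in{\rm Aut}(\A)\subseteq{\rm Aut}(M_8)$ and $\Phi(e_2)=x$, $\Phi(e_3)=y$, as desired. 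I expect the main obstacle to be precisely this last bookkeeping — confirming that all off-diagonal products match and that the (non-unique) choice of $z$ is immaterial — rather than any conceptual difficulty.
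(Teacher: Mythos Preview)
Your proposal is correct and follows essentially the same route as the paper: extend $(x,y)$ to a Cayley--Dickson basis $\{1,x,y,xy,z,xz,yz,(xy)z\}$ by choosing $z\in\{1,x,y,xy\}^\perp$ with $z^2=-1$, define $\Phi$ by sending the standard basis to this new one, verify multiplicativity via the identities \eqref{ets}, \eqref{uvw}, \eqref{mou}, and then note that any automorphism of the composition algebra preserves the involution and the form and hence is an automorphism of $M_8$. Your explicit reduction showing that $x,y$ must be pure imaginary (ruling out the scalar case $x\in\mathbb{C}1$) is a small clarification the paper leaves implicit.
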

 
 \begin{proof}  Since $x^2=-1,$  $y\in  \left\{x\right\}^\perp, y^2=-1,$ it follows that 
$\{e_1, x, y, xy\}$ is an orthonormal system.  Take an element $z\in \{e_1, x, y, xy\}^\perp$ such that $z^2=-1.$
 Using \eqref{uvw} we can infer that 
 $$
 \left\{e_1, x, y, xy, z, xz, yz, xyz\right\}
 $$
is an orthonormal system, in particular, for any three different elements $u, v, w$ from the above system identities from \eqref{uvw} are true.

Define a linear mapping $\Phi$ on $A$ on basis elements as follows:
 \begin{longtable}{llll}
$\Phi(e_1)=e_1,$ & $\Phi(e_2)=x,$ & $\Phi(e_3)=y,$ & $\Phi(e_4)=xy,$\\
$\Phi(e_5)=z,$ & $\Phi(e_6)=xz,$ & $\Phi(e_7)=yz,$ & $\Phi(e_8)=xyz.$
\end{longtable}
Using identities  \eqref{ets}, \eqref{uvw}, \eqref{mou}  we obtain that $\Phi$ is an automorphism of the composition algebra $A.$ Since any automorphsim of $A$  
commutes with the involution and hence, it preserves bilinear form  $\left\langle \cdot,\cdot\right\rangle.$  
It follows that $\Phi$ is also an automorphism of the ternary algebra $M_8.$
 \end{proof}
 
\begin{theorem}\label{localM8}
A linear mapping $\nabla$ on  $M_{8}$ is a local derivation if and only if its matrix  is antisymmetric. 
In particular, the dimension of the space 
$LocDer M_{8}$ of all local derivations of $M_8$ is equal to $28.$
\end{theorem}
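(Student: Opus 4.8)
The plan is to prove both directions of the equivalence, with the forward direction (a local derivation has antisymmetric matrix) being the substantive one, since the converse follows immediately: by Proposition~\ref{chave}'s analogue \eqref{derm8}, every antisymmetric matrix satisfying the $\gamma$-relations is a derivation, but more to the point, I expect the converse here to rest on showing that \emph{every} antisymmetric matrix is realized as $\nabla(x)=\mathfrak{D}_x(x)$ for a suitable derivation $\mathfrak{D}_x$, which is where the $7$-dimensional gap between local derivations and genuine derivations (announced before the theorem) will appear.

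For the forward direction, I would first record that, exactly as in the proof of Theorem~\ref{local}, evaluating the local-derivation condition $\nabla(\Xi_k)=\mathfrak{D}_{\Xi_k}(\Xi_k)$ on each basis vector $e_k$ forces the diagonal entries $b_{kk}$ of the matrix $B=[\nabla]_{\mathcal B}$ to vanish (the image of $e_k$ under any derivation has zero $k$-th coordinate because the derivation matrix \eqref{derm8} is antisymmetric), and then evaluating on the sums $e_k+e_l$ forces $b_{kl}=-b_{lk}$. The mechanism is: for each pair $(k,l)$ there is a derivation $\mathfrak{D}_{e_k+e_l}$ agreeing with $\nabla$ on $e_k+e_l$, its matrix is antisymmetric, and matching the $k$-th and $l$-th coordinates of $B(e_k+e_l)$ against an antisymmetric matrix applied to $e_k+e_l$ yields $b_{kl}+b_{lk}=0$. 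Thus $B$ is antisymmetric, establishing that \emph{every} local derivation has antisymmetric matrix.

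For the converse I would use Proposition~\ref{aut} as the key tool: given an arbitrary antisymmetric matrix $B$ and an arbitrary $x\in M_8$, I must exhibit a derivation $\mathfrak{D}_x$ with $\mathfrak{D}_x(x)=Bx$. The idea is to reduce to a normal form for $x$ via an automorphism. After rescaling one may assume $x$ has a component that can be normalized so that $x^2=-1$ (handling the degenerate isotropic cases separately), and then Proposition~\ref{aut} supplies $\Phi\in\mathrm{Aut}(M_8)$ carrying $e_2$ to $x$; since conjugating a derivation by an automorphism is again a derivation, it suffices to realize $\Phi^{-1}(Bx)$ as $\mathfrak{D}(e_2)$ for some derivation $\mathfrak{D}$. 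The point is that $\mathfrak{D}(e_2)$, as $\mathfrak{D}$ ranges over the $21$-dimensional derivation algebra with basis $\mathcal{B}$, spans the whole $7$-dimensional space $\{e_1\}^\perp\cap\{e_2\}^\perp$ of vectors orthogonal to both $e_1$ and $e_2$ — one reads this directly off $\mathcal{B}$, whose second column entries are independent — whereas for an antisymmetric $B$ the vector $Be_2$ automatically lies in $\{e_2\}^\perp$, and the $e_1$-component can be absorbed using the $\gamma$-freedom.

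The main obstacle, and the step I would spend the most care on, is the dimension bookkeeping that produces exactly $28$ and the gap $28-21=7$. Concretely, the space of antisymmetric $8\times 8$ matrices has dimension $\binom{8}{2}=28$, and I must verify that every one of them is a local derivation (the surjectivity of $B\mapsto$ "local derivation" onto antisymmetric matrices), while the derivation algebra has dimension $21$; the $7$-dimensional quotient is precisely the set of antisymmetric $B$ for which the $\gamma$-constraints of \eqref{derm8} fail yet which still admit a pointwise derivation at each $x$. The delicate verification is that the orbit-and-span argument covers all $x$ uniformly, including isotropic or zero-norm elements where Proposition~\ref{aut} does not directly apply; there one argues by continuity or by a direct limiting/density argument, since the local-derivation condition on a linear map is a Zariski-closed condition and it suffices to check it on the dense set of $x$ with $x^2$ a nonzero scalar.
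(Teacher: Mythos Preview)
Your forward direction matches the paper's proof exactly. The converse is where the approaches diverge, and your sketch has two genuine gaps.

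First, you propose to find $\Phi\in\mathrm{Aut}(M_8)$ with $\Phi(e_2)=x$. But every automorphism of $M_8$ arises from an automorphism of the composition algebra $A$ and hence fixes $e_1$; therefore $\Phi(e_2)\in e_1^\perp$ always, and your normalization is impossible whenever $x$ has a nonzero $e_1$-component. The paper handles this by a preliminary reduction you omit: choose a derivation $D_{e_1}$ with $\nabla(e_1)=D_{e_1}(e_1)$ and replace $\nabla$ by $\nabla-D_{e_1}$, so that $\nabla(e_1)=0$ and $\nabla$ maps into $e_1^\perp$. One then writes $x=\lambda_0 e_1+\lambda x_1$ with $x_1\in e_1^\perp$, $x_1^2=-1$, and applies Proposition~\ref{aut} to $x_1$, not to $x$. (Incidentally, $\{e_1\}^\perp\cap\{e_2\}^\perp$ is $6$-dimensional, not $7$; what you actually need, and what is true, is that $\{\mathfrak{D}(e_2):\mathfrak{D}\in\mathrm{Der}\}=e_2^\perp$, which is $7$-dimensional.)

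Second, the paper uses Proposition~\ref{aut} more sharply than you do: having reduced to $\nabla(e_1)=0$, one knows $y=\nabla(x)\in\{e_1,x_1\}^\perp$, writes $y=\mu y_1$ with $y_1^2=-1$, and invokes Proposition~\ref{aut} to obtain $\Phi$ with $\Phi(e_2)=x_1$ \emph{and} $\Phi(e_3)=y_1$ simultaneously. Then the single fixed derivation $D_1$ (which sends $e_1\mapsto 0$, $e_2\mapsto e_3$) does the job after conjugation and rescaling; no span computation is needed. Your version, using only $\Phi(e_2)=x$, forces you into the bookkeeping of which vectors arise as $\mathfrak{D}(e_2)$.

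Finally, your density/Zariski-closure argument for isotropic $x$ is not valid as stated: for fixed $\nabla$, the set $\{x:\exists D\in\mathrm{Der},\ D(x)=\nabla(x)\}$ is the projection of a closed subvariety of $M_8\times\mathrm{Der}$ and is therefore only constructible, not closed in general, so containing a dense open set does not force it to be everything.
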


\begin{proof} Let $\nabla$ be a local derivation on $M_8.$ By a similar argument as in the proof of Theorem~\ref{local} we obtain that the matrix of $\nabla$ is antisymmetric.

Let $\nabla:M_{8}\to M_{8}$ be an arbitrary  linear mapping with the corresponding antisymmetric matrix $(\nabla_{ij})_{1\le i, j\le 8}.$ 
Let us show that $\nabla$ is a local derivation.

For any $i\in \{1, \ldots, 21\}$ denote by $D_{i}$ derivation of $M_{8}$ defined as in \eqref{derm8}
with the coefficients  $\alpha_{i}=1$ and $\alpha_{j}=0$ for all $j\neq i.$
In fact, 
\begin{align}\label{list}
\mathcal{B}=\left\{D_{i}: 1\le i\le 21\right\}.
\end{align}
Let $x=\sum\limits_{k=1}^8 x_{k}e_{k}\in M_{8}$ be a fixed non zero element. We need to find a derivation $D_{x}$ such that $\nabla(x)=D_{x}(x).$ 

Set $\nabla(x)=\sum\limits_{i=1}^8 y_ie_i.$ Note that 
\begin{align*}
\sum\limits_{i=1}^8 x_{i}y_{i} & =\sum\limits_{i=1}^8\sum\limits_{j\neq i} \nabla_{i j} x_{i}x_{j}=0,  \end{align*}
because $\nabla_{ij}=-\nabla_{ji}$ for all $1\le i, j\le 8.$
This means that 
\begin{align}\label{perp}
\nabla(x)\in x^\perp=\left\{z=\sum\limits_{i=1}^8 z_ie_i\in M_8: (x,z)=\sum\limits_{i=1}^8 x_iz_i=0\right\}.
\end{align}

Take a derivation $D_{e_1}$ of $M_8$ such that $\nabla(e_1)=D_{e_1}(e_1).$ If necessary replacing $\nabla$ with $\nabla-D_{e_1}$ we can assume that
$\nabla(e_1)=0.$ Then $\nabla_{1 i}=\nabla_{i1}=0$ for all $1\le i\le 8.$ Thus $\nabla$ maps $M_8$ into $e_1^\perp,$ that is, \begin{align}\label{nablax}
\nabla(x)\in e_1^\perp
\end{align}
 for all $x\in M_8.$

Let  us  consider the following possible two cases.

Case 1.  
Let  $x=x_{1}e_{1}.$ 
Then 
$$
\nabla(x)=x_1\nabla(e_1)=0=D_x(x),
$$
where $D_x$ is a trivial derivation.

Case 2. Let $x=\lambda_0 e_1+\lambda x_1,$ where $\lambda\neq 0$ and $x_1\in e_1^\perp$ with $x_1^2=-1.$

Since $\nabla(e_1)=0,$ it follows that 
$$
y=\nabla(x)=\nabla(\lambda_0 e_1+\lambda x_1)=\lambda\nabla(x_1).
$$
Combining \eqref{perp} and \eqref{nablax} we obtain that
\begin{align}\label{yperp}
y\in \{e_1, x\}^\perp.     
\end{align}
Thus $y$ represents as $y=\mu y_1,$ where $y_1^2=-1.$ 
By \eqref{yperp} we obtain that
\begin{align*}
y_1\in \{e_1, x_1\}^\perp.     
\end{align*}
By Proposition \ref{aut} there exists an automorphism $\Phi$ such that 
\begin{center}
$\Phi(e_2)=x_1$ and $\Phi(e_3)=y_1.$
 \end{center}
 Take a derivation $D=\frac{\mu }{\lambda}D_1,$ where $D_1$ is a derivation from the list \eqref{list}.
 Note that $D_1(e_1)=0$ and $D_1(e_2)=e_3.$
 Then the following mapping  
 $$
 D_x=\Phi\circ D\circ \Phi^{-1}
 $$
is a derivation. We have that 
\begin{align*}
D_x(x) & = \Phi\circ D\circ \Phi^{-1}(\lambda_0e_1+\lambda x_1)=\Phi(D(\Phi^{-1}(\lambda_0e_1+\lambda x_1)))=\Phi(D(\lambda_0e_1+\lambda e_2))\\
& =\Phi(D(\lambda  e_2))= \mu  \Phi\left(e_3\right)=\mu y_1=y=\nabla(x).
\end{align*}
The proof is completed.
\end{proof}


At the end we formulate Problem concerning  $2$-local derivations  of $M_8.$ 
Likewise as in the proof of Theorem \ref{2local}, we can obtain that any $2$-local derivation of $M_8$ is linear. In particular, it is a local derivation. In this regard, the following question arises.

\begin{problem}\label{2localM8}
Is any $2$-local derivation of   $M_{8}$  a derivation?
\end{problem}

\end{document}